\newtheorem{theorem}{Theorem}[section]
\newtheorem{lemma}[theorem]{Lemma}
\theoremstyle{definition}
\newtheorem{definition}[theorem]{Definition}
\newtheorem{example}[theorem]{Example}
\newtheorem{remark}[theorem]{Remark}
\numberwithin{equation}{section}
\title{\LARGE{
Green's function for singular fractional differential equations and applications
 }}
\begin{document}

\author{%
  Jinsil Lee${}^{a}$ and Yong-Hoon Lee${}^{b,1}$   \
       \\[1ex]
     {\small\itshape ${}^{a}$ Department of Mathematics, University of Georgia,} \\  
     {\small\itshape Athens, GA 30606, USA}\\ 
    {\small\itshape ${}^{b}$ Department of Mathematics, Pusan National University,} \\
    {\small\itshape Busan 46241, Republic of Korea}\\
     {\small\upshape E-mail: jl74942@uga.edu}\\
    {\small\upshape E-mail: yhlee@pusan.ac.kr}
    }

\footnotetext[1]{Corresponding Author}

\date{}
\maketitle

\begin{abstract}
In this paper, we study the existence of positive solutions for nonlinear fractional differential equation with a singular weight. We derive the Green's function and corresponding integral operator and then examine compactness of the operator. As an application, we prove an existence result for positive solutions when nonlinear term satisfies either superlinear or sublinear conditions.
The proof was mainly employed  by  Krasnoselski's classical fixed point theorem.

{\it MSC (2010):} 34B15, 34B18, 34B27

{\it Keywords:} fractional differential equation, existence, positive solution,  singular weight

\end{abstract}
\section{Introduction}
Many problems in physics, control theory, and chemistry can be represented as fractional differential equations (\cite{A, B}). Particularly, many researchers have made contributions to  
the existence and multiplicity of positive solutions for nonlinear fractional differential equations using Krasnoselski's fixed point theorem (\cite{C, E, Q}).
We are interested in applying it to the following equations with a singular weight:

\begin{equation*}\tag*{$(FDE)$}\label{FDE}
\begin{cases}
 D^{\alpha}_{0+}u(t)+h(t)f(u(t))= 0,\quad t\in (0,1),\\
u(0)= 0 = u(1),
\end{cases}
\end{equation*}
where $D^{\alpha}_{0+}$ is the Riemann-Liouville fractional derivative of order $\alpha \in (1,2]$, 
$f\in C([0,\infty),[0,\infty)) $ is a given continuous function and $h\in L^{1}_{loc}((0,1),[0,\infty))$ satisfies the following conditions: 
\begin{flushleft}
$(H_1) \ \ \int_0^1 s^{\alpha -1}h(s)ds<\infty$,\\
$(H_2) \ \ \ h$ is bounded on any compact subinterval in $(0,1].$\\
\end{flushleft}
We notice that coefficient function $h$ satisfying condition $(H_1)$ may not be integrable near $t=0$, as an example, we may consider $h(t)=t^{-\beta}$ where $1<\beta <\alpha$. We see that $h$ satisfies conditions $(H_1)$ and $(H_2)$ but $h\notin L^{1}((0,1),[0,\infty))$.
  
Introducing the Green's function for the case that $h$ is continuous,
Bai and L\"u \cite{E} consider the following nonlinear problem 
\begin{align}\label{eq:::1.1}
\begin{cases}
 D^{\alpha}_{0+}u(t)+f(t,u(t))= 0,\quad t\in (0,1),\\
u(0)= 0 = u(1),
\end{cases}
\end{align}
 where 
$f\in C([0,1]\times[0,\infty), [0,\infty)$). By taking the Riemann-Liouville fractional integral, they set up an equivalent solution operator $S$ by
\begin{align}
Su(t)= \int_{0}^{1} G(t,s)f(s,u(s))ds
\end{align}
where $G(t,s)$ defined by
\begin{align}\label{eq:::1.2}
G(t,s)=
\begin{cases}\displaystyle
 \frac{(t(1-s))^{\alpha-1}-(t-s)^{\alpha-1}}{\Gamma(\alpha)}, \quad 0\leq s\leq t\leq 1,\\\displaystyle
 \frac{(t(1-s))^{\alpha-1}}{\Gamma(\alpha)}, \quad 0\leq t\leq s\leq 1
\end{cases}
\end{align}
is the Green's function for the fractional differential equation
$$ D^{\alpha}_{0+}u(t)= 0$$
with Dirichlet boundary condition. Analysing this operator, they proved the existence of at least three positive solution of problem \eqref{eq:::1.1} under some additional conditions on $f$.

In \cite{Q}, Jiang and Yuan studied positive solutions of problem \eqref{eq:::1.1} with the following hypotheses:\\

\noindent
(A) \ There exist $g\in C([0,\infty), [0,\infty)$), $q_{1},q_{2}\in C((0,1), (0,\infty)$) such that
$$q_{1}(t)g(u)\le f(t,t^{\alpha-2}u)\le q_{2}(t)g(u),$$
and $q_i, \ i=1,2$ also satisfy $q_{i} \in L^1 (0,1)$.\\

\noindent
Under condition (A), they proved that problem \eqref{eq:::1.1} has at least one positive solution either
\begin{itemize}
\item [(1)] \ $g_0=0$, $g_\infty=\infty$ or
\item [(2)] \ $g_0=\infty$, $g_\infty=0$,
\end{itemize}
where $g_0$ and $g_\infty$ are defined by $g_0=\lim_{u\to 0}\frac{g(u)}{u}$ and $g_\infty =\lim_{u\to \infty}\frac{g(u)}{u}$. 
Our concern in this paper is focused on the case that given weight function $h$ is singular at the boundary which may not be integrable on $(0,1)$, and nonlinear term $f$ satisfies conditions \begin{itemize}
\item [(A1)] \ $f_0=0$, $f_\infty=\infty$, and there exist $p$ satisfying $\lim_{u\to \infty}\frac{f(u)}{u^p}=0$ or
\item [(A2)] \ $f_0=\infty$, $f_\infty=0$.
\end{itemize}  In most works, function $h$ to be assumed integrable on $(0,1)$ so that the Green's function can be derived based on the integrability of $h$ near boundary $0$. Under this assumption, the unique solution of (1.1) is represented by the solution of an integral equation (1.2) using the fractional integral.
However, if $h$ is not integrable, we should consider the existence of $D^\alpha_{0+}u$ and its solution space. Moreover, corresponding Green's function can not be obtained by obvious modification from the case $h\in L^1 .$ In the paper \cite{N}, the researchers considered the existence of the solution for the second order differential equation where the function $f$ is a given function satisfying Caratheodory's conditions with singularities at 0 and 1. We are interested in extending the existence results to the fractional case. 
In \cite{LL} , Lee and Lee define the solution space and the definition of the solution and derive the Green's function in this singular situation. \begin{lemma}(\cite{LL})\label{lemma3.2}
Assume $g$ satisfies $(H_1)$ and $(H_2 )$, then the following equation 
\begin{equation*}\label{P_1}
\begin{cases}
 D^{\alpha}_{0+}u(t)+g(t)= 0,\quad t\in (0,1),\\
u(0)= 0 = u(1),
\end{cases}
\end{equation*}
is equivalent to the functional integral equation: 
\begin{equation}\label{integ}
u(t)={ \int_{0}^{1} G (t,s)g(s)ds, }
\end{equation}
where $G (t,s)$ is given in \eqref{eq:::1.2}. Moreover, $u$ in \eqref{integ} are in $AC[0,1]\cap E_\alpha $ and $D^{\alpha-1}_{0+}u$ is absolutely continuous in any compact subinterval of $(0,1)$.
\end{lemma}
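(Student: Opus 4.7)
The plan is to mimic the classical derivation of the Green's function for the regular case, but with care taken to handle the singular weight. The strategy splits into two directions.

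For the direction ODE $\Rightarrow$ integral equation, I would apply the Riemann--Liouville fractional integral $I^{\alpha}_{0+}$ to both sides of $D^{\alpha}_{0+}u = -g$. The first issue is that $I^{\alpha}_{0+}g(t) = \frac{1}{\Gamma(\alpha)}\int_0^t (t-s)^{\alpha-1}g(s)\,ds$ must be finite for each $t \in (0,1]$; this holds because $\alpha \in (1,2]$ implies $s \le s^{\alpha-1}$ on $[0,1]$, so $(H_1)$ forces $s\,g(s) \in L^1(0,1)$, and $(t-s)^{\alpha-1} \le t^{\alpha-1}$ controls the kernel. Once $I^{\alpha}_{0+}g$ is available, the standard inversion identity $I^{\alpha}_{0+}D^{\alpha}_{0+}u = u - c_1 t^{\alpha-1} - c_2 t^{\alpha-2}$ gives $u(t) = -I^{\alpha}_{0+}g(t) + c_1 t^{\alpha-1} + c_2 t^{\alpha-2}$. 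The boundary condition $u(0)=0$, interpreted in the sense provided by $E_{\alpha}$, eliminates the $t^{\alpha-2}$ term, and $u(1)=0$ forces $c_1 = \frac{1}{\Gamma(\alpha)}\int_0^1 (1-s)^{\alpha-1}g(s)\,ds$; collecting terms reproduces the Green's function representation.

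For the converse direction, I would set $u(t) := \int_0^1 G(t,s)g(s)\,ds$ and verify in order: (i) $u$ is well-defined and $u \in C[0,1]$ with $u(0) = u(1) = 0$; (ii) $u \in AC[0,1]$; (iii) $D^{\alpha-1}_{0+}u$ exists and is absolutely continuous on every $[\varepsilon,1]\subset (0,1]$; and (iv) $D^{\alpha}_{0+}u = -g$. For (i), the integrability of $G(t,\cdot)g(\cdot)$ is handled by splitting $[0,1] = [0,\tfrac12] \cup [\tfrac12,1]$: near $s=0$ the Taylor expansion yields $G(t,s) = \mathcal{O}(s)$ combined with $sg(s) \in L^1$, while near $s=1$ one uses $G(t,s) \le \tfrac{1}{\Gamma(\alpha)}(1-s)^{\alpha-1}$ together with boundedness of $g$ from $(H_2)$. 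Step (iii) would proceed by computing $I^{2-\alpha}_{0+}u$ via Fubini on $[\varepsilon,1]$, invoking the semigroup property $I^{2-\alpha}_{0+}I^{\alpha}_{0+}g = I^{2}_{0+}g$, and then differentiating twice; absolute continuity on $[\varepsilon,1]$ comes from $(H_2)$.

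The main obstacle is justifying the fractional-calculus identities (the inversion $I^{\alpha}_{0+}D^{\alpha}_{0+}u = u + \text{polynomial}$, the semigroup law $I^{\beta}_{0+}I^{\alpha}_{0+} = I^{\beta+\alpha}_{0+}$, and the differentiation-under-the-integral needed to recover $-g$) when $g$ is merely in $L^1_{\mathrm{loc}}(0,1)$ with the weighted integrability from $(H_1)$. The textbook proofs assume $g \in L^1(0,1)$ so that Fubini applies globally. The resolution is to work first on $[\varepsilon,1]$, where $(H_2)$ gives $g \in L^{\infty}$ and all classical identities apply verbatim, and then pass to the limit $\varepsilon \to 0^+$ using the weighted integrability $s\,g(s)\in L^1$ to control boundary terms and swap limits. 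This same obstruction is the reason why $D^{\alpha-1}_{0+}u$ is guaranteed to be AC only on compact subintervals of $(0,1)$ and not on $[0,1]$: the singularity of $g$ at $0$ leaves a residue at that endpoint, and the solution space $E_{\alpha}$ of \cite{LL} is designed precisely to accommodate this behavior.
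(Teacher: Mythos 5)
There is a genuine gap, and it sits at the very first step of your forward direction. Note first that the paper does not actually prove this lemma here --- it is quoted from the preprint \cite{LL} --- so the only thing to assess is your argument on its own terms. You apply $I^{\alpha}_{0+}$ to $D^{\alpha}_{0+}u=-g$ and assert that $I^{\alpha}_{0+}g(t)=\frac{1}{\Gamma(\alpha)}\int_0^t(t-s)^{\alpha-1}g(s)\,ds$ is finite because $s\,g(s)\in L^1(0,1)$ and $(t-s)^{\alpha-1}\le t^{\alpha-1}$. That bound gives you $t^{\alpha-1}\int_0^t g(s)\,ds$, and under $(H_1)$--$(H_2)$ the function $g$ need not be integrable near $0$: the paper's own model example is $g(t)=t^{-\beta}$ with $1<\beta<\alpha$, for which $\int_0^{t}g=\infty$ while $\int_0^1 s^{\alpha-1}g(s)\,ds<\infty$. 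Since $(t-s)^{\alpha-1}\ge (t/2)^{\alpha-1}>0$ for $s\in[0,t/2]$, the integral $I^{\alpha}_{0+}g(t)$ genuinely diverges for such $g$; knowing $s\,g(s)\in L^1$ does not help because the kernel $(t-s)^{\alpha-1}$ does not vanish as $s\to 0^+$. The constant $c_1=\frac{1}{\Gamma(\alpha)}\int_0^1(1-s)^{\alpha-1}g(s)\,ds$ you extract from $u(1)=0$ diverges for the same reason. Only the combination $c_1t^{\alpha-1}-I^{\alpha}_{0+}g(t)=\int_0^1 G(t,s)g(s)\,ds$ is finite, because $G(t,s)=\mathcal{O}(s)$ as $s\to 0^+$ --- an observation you correctly make in the converse direction but do not carry back into the forward one. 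This is exactly the point the introduction flags when it says the Green's function ``can not be obtained by obvious modification from the case $h\in L^1$.''

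The same obstruction defeats your proposed repair. The inversion identity $I^{\alpha}_{0+}D^{\alpha}_{0+}u=u+c_1t^{\alpha-1}+c_2t^{\alpha-2}$ (the paper's Lemma 2.6) requires $D^{\alpha}_{0+}u=-g\in C(0,1)\cap L(0,1)$, which fails here, and ``working on $[\varepsilon,1]$ and letting $\varepsilon\to 0^+$'' does not engage with the difficulty: the operators $I^{\alpha}_{0+}$ and $D^{\alpha}_{0+}$ are based at $0$, so even for $t\in[\varepsilon,1]$ their defining integrals run over $[0,t]$ and see the full singularity. A correct argument must either work with the base point $\varepsilon$ and track the correction terms relating $I^{\alpha}_{\varepsilon+}$ to $I^{\alpha}_{0+}$, or (as the structure of $E_\alpha$ and the displayed formula for $D^{\alpha-1}_{0+}Su$ in Section 3 suggest) differentiate the candidate $u(t)=\int_0^1G(t,s)g(s)\,ds$ directly, exploiting the cancellation inside $G$ so that only expressions like $\int_0^t\bigl((1-\tau)^{\alpha-1}-1\bigr)g(\tau)\,d\tau$ and $\tau g(\tau)$ --- which are integrable --- ever appear. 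Your outline for the converse direction (steps (i)--(iv)) is on the right track, but step (iii)'s appeal to the semigroup law $I^{2-\alpha}_{0+}I^{\alpha}_{0+}g=I^{2}_{0+}g$ again presupposes that $I^{\alpha}_{0+}g$ exists, so it needs the same surgery.
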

 They proved that the solution $u$ may not be in $AC^2[0,1]$ so that we understand a solution $u$ is in $E_\alpha \cap AC[0,1] $ with $D^{\alpha-1}_{0+} u(t)$ which is absolutely continuous in any compact subinterval of (0,1) and $u$ satisfies the equation $(FDE_1)$ for almost everywhere $t\in [0,1]$ and boundary conditions
In this paper, we introduce some definitions and lemmas related to fractional calculus and Krasnoselski's classical fixed point theorem in Section 2. 
In Section 3, we set up a corresponding solution operator of problem $\ref{FDE}$ and
prove the existence of a positive solution for the problem.

\section{Preliminaries}

In this section, we introduce some definitions of fractional calculus and some important lemmas, and a theorem that will be used later.
\begin{definition}
We first introduce the basic Banach spaces 
\begin{itemize}
\item $AC[0,1]$ : the space of absolute continuous functions on $[0,1]$ 
\item $C^1_{\gamma}[0,1]= \{u\in C[0,1] : t^{\gamma} u'(t)\in C[0,1], u(0)= 0 = u(1) \}$ with the norm $\|u\|_{C^1_{\gamma}}=\|u\|_\infty+\|u'\|_{C_{\gamma}}$ where $0<\gamma<1$,  $\|u\|_{\infty}=\max_{t\in[0,1]}|u(t)|$ and $\|u\|_{C_{\gamma}}=\max_{t\in[0,1]}|t^{\gamma}u(t)| $
\item $E_\alpha= \{u\in C[0,1] : t^{\alpha-1} D^{\alpha-1}_{0+}u(t) \in C[0,1] \}$ equipped with the norm $\|u\|_{E_\alpha}=\|u\|_{\infty}+\|u\|_1$ where $\|u\|_1=\max_{t\in[0,1]} |t^{\alpha-1} D^{\alpha-1}_{0+}u(t)|$
\end{itemize}
\end{definition}
\begin{definition}{\upshape(\cite{Q}) }\label{def2}
The integral 
$$I^\alpha_{0+}u(t)=\frac{1}{\Gamma (\alpha)}\int_0^t \frac{u(s)}{(t-s)^{1-\alpha}}ds, ~t>0$$
where $\alpha >0$ is called the Riemann-Liouville fractional integral of order $\alpha$.
\end{definition}
\begin{definition}{\upshape(\cite{Q})}
For a function $u(t)$ given in the interval $[0,\infty )$, the expression 
$$D^\alpha_{0+}u(t)=\frac{1}{\Gamma(n-\alpha)}\Big{(}\frac{d}{dx}\Big{)}^n \int_0^t \frac{u(s)}{(t-s)^{\alpha-n+1}}ds$$
where $n=[\alpha]+1, [\alpha]$ denotes the integer part of number $\alpha,$ is called the Riemann-Liouville fractional derivative of order $\alpha$.
\end{definition}
\begin{remark}{\upshape(\cite{Q})} \upshape\label{rmk2.3}
We note for $\lambda>-1$,
$$D^{\alpha}_{0+}t^{\lambda}=\frac{\Gamma(\lambda+1)}{\Gamma(\lambda-\alpha+1)}t^{\lambda-\alpha}.$$
giving in particular $D^{\alpha}_{0+}t^{\alpha-m}=0$, $m=1,2,\cdots,N$, 
where $N$ is the smallest integer greater than or equal to $\alpha$.
\end{remark}
\begin{lemma}{\upshape(\cite{E}) }\label{lem2.4}
Assume that $u\in C(0,1)\cap L(0,1)$. For $\alpha>0$, $D^{\alpha}_{0+} u(t)=0$ has a unique solution
$$u(t)=c_1 t^{\alpha-1}+c_2 t^{\alpha-2}+\cdots +c_n t^{\alpha-n}, \quad c_i \in \mathbb{R}, i=1,2,\cdots,n $$
 where n is the smallest integer greater than or equal to $\alpha$.
\end{lemma}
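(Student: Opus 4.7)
The plan is to reduce the statement to a classical ODE fact by exploiting the factorization $D^\alpha_{0+}=\frac{d^n}{dt^n}\circ I^{n-\alpha}_{0+}$ built into Definition 2.3, and then to invert the fractional integral using $D^{n-\alpha}_{0+}$ and the power-rule from Remark 2.3. Here ``unique solution'' is understood in the sense that the kernel of $D^\alpha_{0+}$ on $C(0,1)\cap L(0,1)$ is exactly the $n$-parameter family spanned by $t^{\alpha-1},t^{\alpha-2},\ldots,t^{\alpha-n}$, so the argument splits into two halves: every solution is of this form, and every member of this family is a solution.

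For the forward implication I would first unfold $D^\alpha_{0+}u(t)=0$ by Definition 2.3 to the equivalent identity $\frac{d^n}{dt^n}\bigl(I^{n-\alpha}_{0+}u\bigr)(t)=0$. Since $u\in L(0,1)$, the convolution $I^{n-\alpha}_{0+}u$ is itself locally integrable, and its vanishing $n$-th derivative forces the polynomial representation
\[
I^{n-\alpha}_{0+}u(t)=\sum_{k=0}^{n-1}b_k\,t^{k}
\]
for constants $b_0,\ldots,b_{n-1}\in\mathbb{R}$. Applying $D^{n-\alpha}_{0+}$ to both sides, using the left-inverse identity $D^{n-\alpha}_{0+}I^{n-\alpha}_{0+}u=u$ valid for $u\in L^{1}(0,1)$, and computing $D^{n-\alpha}_{0+}t^{k}=\frac{\Gamma(k+1)}{\Gamma(k-n+\alpha+1)}\,t^{\alpha-n+k}$ term by term via Remark 2.3, I obtain $u(t)=\sum_{k=0}^{n-1}a_k\,t^{\alpha-n+k}$, which after the reindexing $j:=n-k$ is exactly the asserted form $u(t)=\sum_{j=1}^{n}c_j\,t^{\alpha-j}$.

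The converse is essentially already packaged inside Remark 2.3: for each $j\in\{1,\ldots,n\}$ one has $\alpha-j>-1$ (because $n$ is the smallest integer $\ge\alpha$), so the power-rule applies with $\lambda=\alpha-j$, and the denominator $\Gamma(\alpha-j-\alpha+1)=\Gamma(1-j)$ is a pole of $\Gamma$ for each such $j$, forcing $D^\alpha_{0+}t^{\alpha-j}=0$. Together, the two halves give the claimed characterization. The subtlest point in the above is the left-inverse identity $D^{n-\alpha}_{0+}I^{n-\alpha}_{0+}u=u$ on the space $C(0,1)\cap L(0,1)$; I would either cite the standard composition rule for Riemann--Liouville operators (e.g.\ Samko--Kilbas--Marichev) or verify it directly by noting that $I^{n}_{0+}u$ admits $n$ classical derivatives with $\frac{d^n}{dt^n}I^n_{0+}u=u$ a.e.\ whenever $u\in L^{1}$, which is precisely where the integrability hypothesis on $u$ is needed.
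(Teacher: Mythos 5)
Your argument is correct, and there is nothing in the paper to compare it against: Lemma~\ref{lem2.4} is imported verbatim from \cite{E} with no proof given here, and the proof in that source (and in the standard references it draws on) is exactly your two-step scheme --- unfold $D^\alpha_{0+}u=\frac{d^n}{dt^n}I^{n-\alpha}_{0+}u=0$ to conclude $I^{n-\alpha}_{0+}u$ is a polynomial of degree at most $n-1$, recover $u$ by applying $D^{n-\alpha}_{0+}$, and get the converse from the power rule of Remark~\ref{rmk2.3} via the poles of $\Gamma$ at $1-j$. The only imprecision is in your suggested direct verification of the left-inverse identity: what is actually needed for $D^{n-\alpha}_{0+}I^{n-\alpha}_{0+}u=u$ with $n-\alpha\in[0,1)$ is the semigroup property $I^{1-(n-\alpha)}_{0+}I^{n-\alpha}_{0+}u=I^{1}_{0+}u$ (Fubini plus the Beta integral) followed by Lebesgue differentiation of $I^1_{0+}u$, rather than the integer-order fact about $I^{n}_{0+}u$ that you quote; with that substitution, and the observation that the a.e.\ identity upgrades to a pointwise one because both sides are continuous on $(0,1)$, the proof is complete.
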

As $D^{\alpha}_{0+}I^{\alpha}_{0+} u(t)=u(t)$ for all $u\in C(0,1)\cap L(0,1)$. From Lemma \ref{lem2.4}, we deduce the following statement.
\begin{lemma}{\upshape(\cite{E},\cite{O}) }
Assume that $u\in C(0,1)\cap L(0,1)$ with a fractional derivative of order $\alpha>0$ that belongs to $C(0,1)\cap L(0,1)$. Then
$$I^{\alpha}_{0+}D^{\alpha}_{0+} u(t)=u(t)+c_1 t^{\alpha-1}+c_2 t^{\alpha-2}+\cdots +c_n t^{\alpha-n}, \quad c_i \in \mathbb{R}, i=1,2,\cdots,n .$$
Moreover, if $0<\alpha <1$ and $u(t) \in C[0,1]$, then $D^{\alpha}_{0+} u(t) \in C(0,1)\cap L(0,1)$ and $$I^{\alpha}_{0+}D^{\alpha}_{0+} u(t)=u(t).$$
\end{lemma}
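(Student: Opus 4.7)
The plan is to establish both assertions by leveraging the left inverse identity $D^{\alpha}_{0+} I^{\alpha}_{0+} v = v$ (stated in the text immediately before the lemma) together with the characterization of $\ker D^{\alpha}_{0+}$ provided by Lemma~\ref{lem2.4}.

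For the first assertion, I would set $w(t) := I^{\alpha}_{0+} D^{\alpha}_{0+} u(t) - u(t)$ and observe that $w \in C(0,1) \cap L(0,1)$, since $D^{\alpha}_{0+} u \in C(0,1) \cap L(0,1)$ by hypothesis (so its Riemann--Liouville fractional integral is continuous on $(0,1)$ and integrable) and $u$ itself has the same regularity. Applying $D^{\alpha}_{0+}$ to $w$, the first term yields $D^{\alpha}_{0+} u$ by the left inverse identity applied to $v = D^{\alpha}_{0+} u$, which exactly cancels $D^{\alpha}_{0+} u$ coming from the second term. Hence $D^{\alpha}_{0+} w \equiv 0$ on $(0,1)$, and Lemma~\ref{lem2.4} forces
$$w(t) = c_1 t^{\alpha-1} + c_2 t^{\alpha-2} + \cdots + c_n t^{\alpha-n}$$
for some real constants $c_i$. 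Rearranging produces the stated identity.

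For the second assertion, with $0 < \alpha < 1$ we have $n = 1$ in Lemma~\ref{lem2.4}, so the first part reduces to $I^{\alpha}_{0+} D^{\alpha}_{0+} u(t) = u(t) + c_1 t^{\alpha-1}$ once one verifies that $D^{\alpha}_{0+} u \in C(0,1) \cap L(0,1)$. The latter would follow from direct estimates on the defining integral $\frac{1}{\Gamma(1-\alpha)} \frac{d}{dt} \int_0^t (t-s)^{-\alpha} u(s)\,ds$, using boundedness of $u \in C[0,1]$ together with the mild singularity $-\alpha > -1$. To eliminate $c_1$, I would argue by a boundedness comparison near $t = 0$: the right-hand side $u(t) + c_1 t^{\alpha-1}$ can remain bounded only if $c_1 = 0$, since $u$ is bounded on $[0,1]$ while $t^{\alpha-1} \to +\infty$, provided one can independently verify that $I^{\alpha}_{0+} D^{\alpha}_{0+} u(t)$ stays bounded as $t \to 0^+$ for $u \in C[0,1]$.

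The main obstacle lies entirely in the second assertion: guaranteeing that the Riemann--Liouville derivative of a merely continuous $u$ is classically well-defined, continuous on $(0,1)$, and locally integrable is delicate. I would approach it either by representing $D^{\alpha}_{0+} u = \frac{d}{dt} I^{1-\alpha}_{0+} u$ and estimating the difference quotient of $I^{1-\alpha}_{0+} u$ uniformly on compact subintervals, or, when additional absolute continuity is available, by the identity $D^{\alpha}_{0+} u = I^{1-\alpha}_{0+} u'$ and an approximation argument. Once this regularity is established, the elimination of $c_1$ is a straightforward boundedness argument near $t = 0^+$.
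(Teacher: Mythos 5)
Your argument for the first identity is exactly the paper's: the paper offers only the one-line deduction ``since $D^{\alpha}_{0+}I^{\alpha}_{0+}u=u$ on $C(0,1)\cap L(0,1)$, the statement follows from Lemma~\ref{lem2.4},'' and your device of setting $w=I^{\alpha}_{0+}D^{\alpha}_{0+}u-u$, checking $D^{\alpha}_{0+}w=0$ via the left-inverse identity, and invoking the kernel description of $D^{\alpha}_{0+}$ is the standard way to make that one line precise. No issues there.

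The second assertion is where your proposal has a genuine gap, and you have correctly located it yourself. Two things are asserted but not established. First, the existence and regularity of $D^{\alpha}_{0+}u=\frac{d}{dt}I^{1-\alpha}_{0+}u$ for a merely continuous $u$: continuity of $u$ gives continuity (indeed H\"older continuity) of $I^{1-\alpha}_{0+}u$, but it does not give differentiability, so ``direct estimates on the defining integral'' cannot succeed without additional structure on $u$; this half of the lemma is imported from \cite{O}, and the present paper does not reprove it either. Second, your elimination of $c_1$ requires that $I^{\alpha}_{0+}D^{\alpha}_{0+}u$ stay bounded as $t\to 0^+$, and this does not follow merely from $D^{\alpha}_{0+}u\in L(0,1)$: for a general $v\in L(0,1)$ one can have $I^{\alpha}_{0+}v(t)\to\infty$ as $t\to 0^+$ (take $v$ with a nonintegrable-looking but still $L^1$ concentration at the origin, e.g. $v(s)=s^{-1}(\log(1/s))^{-2}$, for which $I^{\alpha}_{0+}v(t)\geq t^{\alpha-1}\int_0^t v/\Gamma(\alpha)$ blows up). The cleaner and standard route is the identity $I^{\alpha}_{0+}D^{\alpha}_{0+}u(t)=u(t)-\frac{(I^{1-\alpha}_{0+}u)(0+)}{\Gamma(\alpha)}\,t^{\alpha-1}$ combined with the observation that $(I^{1-\alpha}_{0+}u)(0+)=0$ whenever $u\in C[0,1]$, which identifies and kills the coefficient directly. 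Since the paper merely cites \cite{E} and \cite{O} for this lemma, your first paragraph already reproduces everything the paper actually supplies; as a self-contained proof, however, the second half remains incomplete.
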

In the paper (\cite{E},\cite{Q}), the writers introduce useful properties of the Green's function $G(t,s)$ defined by \eqref{eq:::1.2} as follows:
\begin{lemma}{\upshape(\cite{E},\cite{Q}) }\label{lem2.5}
The Green function $G(t,s)$ satisfies the following conditions: 
\begin{itemize}
\item[$(1)$]\ $G(t,s) \in C([0,1] \times [0,1])$, and $G(t,s)>0$ for $t,s$ $\in (0,1),$
\item[$(2)$]\ $\max_{0\leq t \leq 1}G(t,s)=G(s,s), ~s\in (0,1)$
\item[$(3)$]\ $G(t,s)=G(1-s,1-t),$ for $t,s \in (0,1),$
\item[$(4)$]\ $  \frac{\alpha-1}{\Gamma(\alpha)}t^{\alpha-1}(1-t)(1-s)^{\alpha-1}s \leq G(t,s) \leq  \frac{1}{\Gamma(\alpha)}t^{\alpha-1}(1-t)(1-s)^{\alpha-2}.$
\item[$(5)$]\ $  G(t,s) \leq  \frac{1}{\Gamma(\alpha)}s(1-s)^{\alpha-1} t^{\alpha-2}.$
\end{itemize}
\end{lemma}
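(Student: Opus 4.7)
The plan is to verify the five properties in the order stated, exploiting symmetry to reduce work, and using the Mean Value Theorem applied to $\phi(x)=x^{\alpha-1}$ as the main quantitative tool for the two-sided estimate.

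For property (1), I would first check continuity at the diagonal: substituting $s=t$ into either piece of \eqref{eq:::1.2} gives $t^{\alpha-1}(1-t)^{\alpha-1}/\Gamma(\alpha)$, so the two branches agree and $G$ is continuous on $[0,1]^{2}$. Positivity on the upper triangle $0\le t\le s\le 1$ is immediate from the formula; for $0\le s\le t\le 1$ I would note that $t(1-s)-(t-s)=s(1-t)\ge 0$ with equality only at the boundary, and $x\mapsto x^{\alpha-1}$ is strictly increasing on $(0,\infty)$ since $\alpha>1$, so the difference is positive on $(0,1)^{2}$. Property (3) is a direct substitution: for $s\le t$, the pair $(1-s,1-t)$ lies in the region where the upper-triangle formula applies, and one checks $(1-s)(1-(1-t))=t(1-s)$ and $(1-s)-(1-t)=t-s$, giving the claimed symmetry.

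For property (2), I would show that $\partial_{t}G(t,s)\ge 0$ on $[0,s]$ and $\partial_{t}G(t,s)\le 0$ on $[s,1]$. On $[0,s]$ the formula is $t^{\alpha-1}(1-s)^{\alpha-1}/\Gamma(\alpha)$, clearly increasing in $t$. On $[s,1]$ one computes
\[
\Gamma(\alpha)\partial_{t}G(t,s)=(\alpha-1)\bigl[(1-s)^{\alpha-1}t^{\alpha-2}-(t-s)^{\alpha-2}\bigr],
\]
so the task is to show $(1-s)^{\alpha-1}t^{\alpha-2}\le(t-s)^{\alpha-2}$. Since $\alpha-2\le 0$, this is equivalent to $(1-s)^{\alpha-1}\le(1-s/t)^{\alpha-2}$, whose left side is $\le 1$ (as $\alpha-1>0$) and right side is $\ge 1$, so the inequality follows at once.

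For property (4), the key step is the Mean Value Theorem applied to $\phi(x)=x^{\alpha-1}$ on the interval $[t-s,\ t(1-s)]$ (which is nondegenerate when $s\le t$ because $t(1-s)-(t-s)=s(1-t)\ge 0$). This gives
\[
(t(1-s))^{\alpha-1}-(t-s)^{\alpha-1}=(\alpha-1)\,\xi^{\alpha-2}\,s(1-t)
\]
for some $\xi\in[t-s,t(1-s)]$. Since $\alpha-2\le 0$, $\xi^{\alpha-2}$ is decreasing in $\xi$, so taking $\xi=t(1-s)$ yields the lower bound and $\xi=t-s$ gives the upper bound; each is then simplified using $t,1-s\le 1$ and the elementary inequality $(t-s)/(1-s)\le t$ to match the stated form. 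The upper-triangle case $t\le s$ is handled separately by the direct estimates $(1-s)^{\alpha-1}\le(1-t)(1-s)^{\alpha-2}$ (since $1-s\le 1-t$) and $(1-s)^{\alpha-1}\ge(\alpha-1)(1-t)(1-s)^{\alpha-1}s$ (since $(\alpha-1)(1-t)s\le 1$). Finally, property (5) is a free consequence of (3) and (4): apply the upper bound in (4) to $G(1-s,1-t)$ and invoke symmetry. The main obstacle is the lower-triangle case of (4), where the difference $(t(1-s))^{\alpha-1}-(t-s)^{\alpha-1}$ cannot be estimated by brute factoring; the MVT reduction to a monotone function of $\xi$, together with the sign of $\alpha-2$, is what makes both one-sided bounds come out with the same factors $t^{\alpha-1}(1-t)$.
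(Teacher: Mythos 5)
Most of your write-up is sound, and it does strictly more than the paper itself, which simply cites properties $(1)$--$(4)$ from \cite{E} and \cite{Q} and only derives $(5)$ from $(3)$ and $(4)$ exactly as you do. Your arguments for $(1)$, $(2)$, $(3)$, $(5)$, the upper-triangle case of $(4)$, and the lower bound in $(4)$ (MVT with $\xi=t(1-s)$, then $(t(1-s))^{\alpha-2}\ge (t(1-s))^{\alpha-1}$) are all correct.

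There is, however, a genuine gap in the upper bound of $(4)$ on the region $s\le t$. Evaluating the MVT at the left endpoint $\xi=t-s$ gives
$$\Gamma(\alpha)G(t,s)\le(\alpha-1)(t-s)^{\alpha-2}\,s(1-t),$$
and for $1<\alpha<2$ this quantity tends to $+\infty$ as $s\uparrow t$ with $t$ fixed in $(0,1)$, whereas the claimed bound $t^{\alpha-1}(1-t)(1-s)^{\alpha-2}$ is finite there --- indeed it equals $\Gamma(\alpha)G(t,t)=(t(1-t))^{\alpha-1}$ exactly at $s=t$. So no subsequent ``simplification'' can carry this intermediate bound to the stated one; moreover your auxiliary inequality $(t-s)/(1-s)\le t$ bounds $(t-s)^{\alpha-2}$ from \emph{below}, not above, since the exponent is nonpositive. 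The step that actually works is to abandon the MVT here and use $y^{\alpha-1}\ge y$ for $y\in[0,1]$ (valid because $0<\alpha-1\le1$): with $A=t(1-s)$ and $B=s(1-t)$, so that $t-s=A-B$ and $0\le B\le A$, one gets
$$(A-B)^{\alpha-1}=A^{\alpha-1}\Bigl(1-\tfrac{B}{A}\Bigr)^{\alpha-1}\ge A^{\alpha-1}-A^{\alpha-2}B,$$
hence $\Gamma(\alpha)G(t,s)\le A^{\alpha-2}B=t^{\alpha-2}(1-s)^{\alpha-2}s(1-t)\le t^{\alpha-1}(1-t)(1-s)^{\alpha-2}$, the last inequality using $s\le t$. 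With that replacement your proof is complete.
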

The property $(1), (2)$ are proved in Lemma 2.4 by monotonicity of $G(t,s)$ in the paper \cite{E}. And the authors in \cite{Q} deduced properties $(3)$ and $(4)$.
From the properties $(3)$ and $(4)$, we can have 
\begin{align}
G(t,s)=G(1-s,1-t)\le \frac{1}{\Gamma(\alpha)}(1-s)^{\alpha-1}(s)(t)^{\alpha-2}.
\end{align}
\begin{theorem} {\upshape (Fixed point theorem of cone expansion/compression type)}\label{thm2.4}
Let {$E$} be a Banach space and let {$K$} be a cone in {$E$}. Assume that {$\Omega_1$} and {$\Omega_2$} are open subsets of {$E$} with
{$0\in \Omega_1$}, {$\overline{\Omega_1}\subset \Omega_2$}. Assume that {$T: K\cap (\overline{\Omega_2}\setminus \Omega_1) \to K$}
is completely continuous such that either
\begin{itemize}
\item [$(1)$] \ {$\|Tu\|\leq \|u\|$}, for {$u\in K\cap \partial \Omega_1$} and
{$\|Tu\|\geq \|u\|$}, for {$u\in K\cap \partial\Omega_2$}, or
\item [$(2)$] \ {$\|Tu\|\geq \|u\|$}, for {$u\in K\cap \partial \Omega_1$} and
{$\|Tu\|\leq \|u\|$}, for {$u\in K\cap \partial\Omega_2$}.
\end{itemize}
Then {$T$} has a fixed point in {$K\cap ( \overline{\Omega_2}\setminus \Omega_1)$}.
\end{theorem}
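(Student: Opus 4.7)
This is Krasnoselski's classical cone fixed point theorem, and the cleanest proof goes through the fixed-point index for completely continuous maps on a cone. The plan is to compute $i(T, K \cap \Omega_1, K)$ and $i(T, K \cap \Omega_2, K)$ separately using the two norm inequalities, and then to combine them via the additivity of the index to obtain a nonzero index on the annular region $K \cap (\Omega_2 \setminus \overline{\Omega_1})$, which by the solution property of the index forces a fixed point.

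First I would dispose of a trivial case: if $T$ has a fixed point already on $K \cap \partial \Omega_1$ or on $K \cap \partial \Omega_2$, there is nothing to prove. Assuming otherwise, I would treat case (1) in detail; case (2) is symmetric with the roles of the two boundaries interchanged. On $K \cap \partial \Omega_1$ the hypothesis $\|Tu\| \le \|u\|$ rules out any solution of $Tu = \lambda u$ with $\lambda > 1$, since such a relation forces $\lambda \|u\| = \|Tu\| \le \|u\|$ and hence $\lambda \le 1$. Invoking the standard ``no upward eigenvector'' criterion for the cone index (itself a consequence of the homotopy invariance applied to the straight-line homotopy between $T$ and the zero map) yields
\[
i(T, K \cap \Omega_1, K) = 1.
\]

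On $K \cap \partial \Omega_2$ the hypothesis $\|Tu\| \ge \|u\|$ is the complementary one. I would fix any $e \in K \setminus \{0\}$ and verify that $u \ne Tu + \mu e$ for every $u \in K \cap \partial \Omega_2$ and every $\mu \ge 0$. The argument proceeds by a homotopy from $T$ to the map $u \mapsto Tu + c e$ with $c$ large: closure of $K$ under addition and nonnegative scaling is what keeps the homotopy values inside $K$, and the expansion inequality together with boundedness of $\Omega_2$ rules out fixed points on $K \cap \partial \Omega_2$ along the homotopy. The resulting criterion gives $i(T, K \cap \Omega_2, K) = 0$.

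Combining the two computations, the additivity and excision properties of the cone index produce
\[
i(T, K \cap (\Omega_2 \setminus \overline{\Omega_1}), K) = i(T, K \cap \Omega_2, K) - i(T, K \cap \Omega_1, K) = -1 \ne 0,
\]
so the solution property of the index yields a fixed point of $T$ in $K \cap (\overline{\Omega_2} \setminus \Omega_1)$. The real obstacle is not the algebra above but the underlying construction of the cone fixed-point index itself: one first extends $T$ to the ambient Banach space via a retraction onto $K$, defines the index as the Leray--Schauder degree of the extension, and then verifies that normalization, homotopy invariance, and additivity transfer to the cone setting. Since the statement is a cornerstone of nonlinear functional analysis and is quoted from the literature here, in practice one simply cites it rather than reproving it.
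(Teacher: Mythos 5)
The paper offers no proof of this statement at all: it is quoted as the classical Krasnoselskii cone expansion/compression theorem and used as a black box, so any proof you supply is by definition a different route from the paper's. Your index-theoretic sketch is the standard one (compute $i(T,K\cap\Omega_1,K)=1$, $i(T,K\cap\Omega_2,K)=0$, subtract by additivity, conclude by the solution property), and the index-$1$ computation and the final additivity bookkeeping are correct. The one soft spot is the index-$0$ step on $K\cap\partial\Omega_2$: the criterion ``$u\neq Tu+\mu e$ for all $\mu\ge 0$'' is the natural tool for the \emph{order-theoretic} form of the theorem, but it does not follow from the purely norm-theoretic hypothesis $\|Tu\|\ge\|u\|$ in a general Banach space --- from $u=Tu+\mu e$ with $u,Tu,e\in K$ you cannot conclude $\|u\|\ge\|Tu\|+\mu\|e\|$ or even $\|u\|\ge\|Tu\|$ unless the cone is monotone with respect to the norm, so ``boundedness of $\Omega_2$'' does not rescue the homotopy as you claim. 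The standard repair for the norm form (Guo--Lakshmikantham, Lemma 2.3.2) is to use instead: (i) $\inf_{u\in K\cap\partial\Omega_2}\|Tu\|>0$, which holds here because $0\in\Omega_1\subset\Omega_2$ forces $\inf_{u\in\partial\Omega_2}\|u\|>0$ and hence $\|Tu\|\ge\|u\|$ is bounded below; and (ii) $Tu\neq\mu u$ for $0<\mu\le 1$, which is immediate from $\|Tu\|\ge\|u\|$ once boundary fixed points are excluded; these two facts give $i(T,K\cap\Omega_2,K)=0$ via the homotopy $H(t,u)=(1+\lambda t)Tu$ with $\lambda$ large. With that substitution your argument is complete; as you note, in the context of this paper one would simply cite the theorem.
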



\section{An Application to Nonlinear Problems}
Our goal in this section is to prove an existence result for following nonlinear problem
\begin{equation*}
\quad\quad\quad\quad\quad\quad\quad
\begin{cases}
 D^{\alpha}_{0+}u(t)+h(t)f(u(t))= 0,\quad t\in (0,1),\\
u(0)= 0 = u(1),
\end{cases}
\quad\quad\quad\quad\quad{(FD)}
\end{equation*}
where $D^{\alpha}_{0+}$ is the Riemann-Liouville fractional derivative of order $\alpha \in (1,2]$, 
$f\in C([0,\infty), [0,\infty))$ and $h$ satisfies $(H_1)$ and $(H_2)$.
In \cite{LL}, the researchers proved that a solution of 
$$u(t)=\int_0^1 G(t,s)h(s)f(u(s))ds:=Su(t),$$
where $G(t,s)$ is the Green's function given by \eqref{eq:::1.2} satisfies the equation $(FD)$. In this paper, we use the fixed point method to find a solution of the integral problem and this fixed point satisfies our equation $(FD)$.
We now state our main theorem in this section. 
For $u\in C[0,1]$, we define a nonlinear operator $S:C[0,1] \rightarrow C[0,1]$ by 
$$Su(t)=\int_0^1 G(t,s)h(s)f(u(s))ds.$$ 
Assume that $f$ satisfies (A1). Then, there exists a constant $p>1, r, R, \epsilon$ and $M_1$ such that $$|f(u)|<\epsilon |u|~~ \text{for}~ |u|<r,$$  $$|f(u)|<\epsilon |u|^p ~ \text{for}~ |u|>R,$$ and $$|f(u)|<M_1~ \text{for}~ r\le |u|\le R$$ 
 and then we have
\begin{align*}
|(Su)(t)|&\le \int_{0}^{1} G(t,s)h(s)f(u(s))ds\\
&\le \int_{0}^{1} \frac{s^{\alpha-1}(1-s)^{\alpha-1}h(s)}{\Gamma(\alpha)}f(u(s))ds\\
&<\int_{|u|<r} \frac{s^{\alpha-1}(1-s)^{\alpha-1}h(s)}{\Gamma(\alpha)}\epsilon |u(s)| ds+\int_{r<|u|<R} \frac{s^{\alpha-1}(1-s)^{\alpha-1}h(s)}{\Gamma(\alpha)}M_1 ds\\&+\int_{|u|>R} \frac{s^{\alpha-1}(1-s)^{\alpha-1}h(s)}{\Gamma(\alpha)}\epsilon |u(s)|^p ds\\
&<\int_0^1 \frac{s^{\alpha-1}(1-s)^{\alpha-1}h(s)}{\Gamma(\alpha)}ds\hat{M}<\infty 
\end{align*} 
where $\hat{M}=\epsilon \|u\|_\infty+M_1+\epsilon \|u\|^p_\infty$.
Moreover, by the similar calculation in the paper \cite{LL}, we have
\begin{align*}
D_{0+}^{\alpha-1}Su(t)&=\int_0^t ((1-\tau)^{\alpha-1}-1)h(\tau)f(u(\tau))d\tau-\int_t^1 (1-\tau)^{\alpha-1}h(\tau)f(u(\tau))d\tau
\end{align*} 
and then 
\begin{align*}
&|t^{\alpha -1}D_{0+}^{\alpha-1}Su(t)|\\&=t^{\alpha -1} \int_0^t (1-(1-\tau)^{\alpha-1})h(\tau)f(u(\tau))d\tau+t^{\alpha -1}\int_t^1 (1-\tau)^{\alpha-1}h(\tau)f(u(\tau))d\tau\\
&=t^{\alpha -1} \int_0^t [\int_{1-\tau}^1 (\alpha -1)s^{\alpha -2}ds] h(\tau)f(u(\tau))d\tau +t^{\alpha -1}\int_t^1 \frac{\tau^{\alpha-1}}{\tau^{\alpha-1}}(1-\tau)^{\alpha-1}h(\tau)f(u(\tau))d\tau\\
&\le t^{\alpha -1} \int_0^t (\alpha -1)(1-\tau)^{\alpha -2}(1-(1-\tau)) h(\tau)d\tau \hat{M}+t^{\alpha -1}\int_t^1 \frac{\tau^{\alpha-1}}{t^{\alpha-1}}(1-\tau)^{\alpha-1}h(\tau)d\tau \hat{M}\\
&\le [(\alpha -1) \int_0^t (1-\tau)^{\alpha -2}\tau h(\tau)d\tau +\int_t^1 \tau^{\alpha-1}(1-\tau)^{\alpha-1}h(\tau)d\tau ] \hat{M}
\end{align*} 
Since $((1-\tau)^{\alpha-1}-1)h(\tau), \tau h(\tau) \in L^1(0,1)$, we conclude that $Su :C[0,1] \rightarrow E_\alpha [0,1]$ is well defined and bounded when $f$ satisfies (A1). With the similar technique in the proof of Lemma 3.2 in \cite{LL}, it follows that $Su\in AC[0,1]\cap E_\alpha [0,1]$ and therefore $u=Su \in E_\alpha[0,1]$. Similarly, we get the same result if $f$ satisfies (A2). As a results, if we can find the fixed point of $u=Su$ in $C[0,1]$, it can be a solution of our equation $(FD)$ and then $u\in E_\alpha [0,1]$. So, we can get the following Theorem
\begin{theorem}\label{thm4.1}
Assume that the given function $h \in L^{1}_{loc}((0,1),[0,\infty))$ satisfies $(H_1)$, $(H_2)$ and $f$ satisfies either $(A1)$ or $(A2)$. Then problem $(FD)$ has at least one positive solution in $E_\alpha [0,1]$.
\end{theorem}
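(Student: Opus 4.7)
My plan is to recast the boundary value problem as a fixed point problem for the operator $S$ on a suitable cone in $C[0,1]$ and apply Krasnoselski's Theorem \ref{thm2.4}. For $\delta\in(0,1/2)$, define the cone
\[
K=\{u\in C[0,1]:u(t)\ge 0\text{ on }[0,1],\ \min_{t\in[\delta,1-\delta]}u(t)\ge \gamma\|u\|_\infty\},
\]
where $\gamma>0$ is extracted from Lemma \ref{lem2.5}(4) by comparing the lower bound $G(t,s)\ge\frac{\alpha-1}{\Gamma(\alpha)}t^{\alpha-1}(1-t)(1-s)^{\alpha-1}s$ restricted to $t\in[\delta,1-\delta]$ against the pointwise upper bound $G(t,s)\le\frac{1}{\Gamma(\alpha)}t^{\alpha-1}(1-t)(1-s)^{\alpha-2}$ that controls $\|Su\|_\infty$. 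These two estimates give $Su(t)\ge \gamma\|Su\|_\infty$ on $[\delta,1-\delta]$, so $S(K)\subset K$ once nonnegativity of $h$, $f$, and $G$ is invoked.

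Next I would verify that $S:K\to K$ is completely continuous. The well‑definedness and uniform boundedness of $S$ on bounded sets of $K$ is already furnished by the estimate displayed before the theorem (the growth bound $f(u)\le \epsilon|u|^p$ coming from $(A1)$, or the sublinear bound $f(u)\le \epsilon |u|$ coming from $(A2)$, combined with $(H_1)$). Continuity of $S$ follows from the continuity of $f$ together with the Lebesgue dominated convergence theorem, using $\hat{M}\cdot s^{\alpha-1}(1-s)^{\alpha-1}h(s)/\Gamma(\alpha)\in L^1(0,1)$ as an integrable majorant. For compactness I would invoke Arzel\`a–Ascoli: equicontinuity of $\{Su:u\in K,\|u\|_\infty\le \rho\}$ on $[0,1]$ reduces, via the explicit form of $G(t,s)$, to the absolute continuity of the integrals $\int_0^t s^{\alpha-1}h(s)ds$ and $\int_t^1 (1-s)^{\alpha-2}h(s)ds$ (when the latter is finite), which hold under $(H_1)$–$(H_2)$.

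With the cone and compactness in hand, the application of Krasnoselski's theorem is standard. Under $(A1)$: $f_0=0$ yields an $r_1>0$ small so that $f(u)\le \epsilon u$ for $0\le u\le r_1$, hence $\|Su\|_\infty\le \epsilon r_1\cdot\sup_t\int_0^1 G(t,s)h(s)\,ds\le r_1=\|u\|_\infty$ on $K\cap\partial\Omega_{r_1}$, for $\epsilon$ small enough (the supremum is finite by Lemma \ref{lem2.5}(4) and $(H_1)$). Then $f_\infty=\infty$ produces large $M$ with $f(u)\ge Mu$ for $u\ge r_2^*$; taking $r_2=r_2^*/\gamma$, every $u\in K\cap\partial\Omega_{r_2}$ satisfies $u(s)\ge r_2^*$ on $[\delta,1-\delta]$, so for a chosen $t_0\in[\delta,1-\delta]$,
\[
Su(t_0)\ge M\gamma r_2\int_\delta^{1-\delta}G(t_0,s)h(s)\,ds\ge r_2,
\]
once $M$ is chosen large enough (the positive integral here is bounded below by $(H_2)$ and the positivity of $G$ on $(0,1)^2$). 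Under $(A2)$ the same cone works but the roles of the two boundary spheres are interchanged: $f_0=\infty$ gives the lower estimate on the small sphere, while $f_\infty=0$, split with the mid‑range bound $|f(u)|\le M_1$ on $[r_1,R]$ exactly as in the displayed estimate, yields the upper estimate on a sufficiently large sphere.

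The principal obstacle I anticipate is the compactness step, because $h$ is only locally integrable away from $0$, so equicontinuity of $\{Su\}$ near $t=0$ cannot be obtained by bounding $h$; one must exploit the vanishing of $G(t,s)$ like $t^{\alpha-1}$ at the left endpoint together with the weighted integrability $\int_0^1 s^{\alpha-1}h(s)\,ds<\infty$ from $(H_1)$ to absorb the singularity. A secondary care point is verifying that the finiteness of $\sup_t\int_0^1 G(t,s)h(s)\,ds$ really follows from $(H_1)$, since the upper bound in Lemma \ref{lem2.5}(4) carries a factor $(1-s)^{\alpha-2}$; here one uses instead the symmetric bound (5) and the bound on the $s\le t$ piece derived in the proof of Lemma \ref{lemma3.2} to stay within the class of weights controlled by $(H_1)$.
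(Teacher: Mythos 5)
Your proposal is correct and follows essentially the same route as the paper: the same solution operator $S$, a Krasnoselskii-type cone (the paper's cone $u(t)\ge(\alpha-1)t(1-t)\|u\|_\infty$ yields exactly your subinterval condition with $\delta=1/4$ and $\gamma=(\alpha-1)/16$, which is all the proof actually uses on $[\tfrac14,\tfrac34]$), complete continuity via Arzel\`a--Ascoli, and the identical two-case cone expansion/compression argument with the small/large sphere estimates. The only differences are in bookkeeping: for the uniform upper bound the paper uses $G(t,s)\le G(s,s)$ from Lemma~\ref{lem2.5}(2), which pairs directly with $(H_1)$ and sidesteps the $(1-s)^{\alpha-2}$ integrability issue you flag, and the equicontinuity step is delegated to the $Su'\in L^1(0,1)$ estimate of \cite{LL} rather than argued directly from the kernel as you propose.
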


For this, let $E_\alpha$ be endowed with the ordering $u\leq v$ if $u(t)\leq v(t)$ for all $t \in [0,1]$. We define a cone $\mathcal{K}\subseteq E_\alpha$ by
$$\mathcal{K}=\{u \in C[0,1] \ | \ u(t)\geq 0, \ u(t)\geq(\alpha-1)t(1-t)\|u\|_\infty \}.$$ 
We first check the compactness of operator $S$. 


\begin{lemma}
Let $S:\mathcal{K} \to E_\alpha$ be the operator defined by
$$Su(t)=\int_{0}^{1} G(t,s)h(s)f(u(s))ds $$
where $f \in C( [0,\infty),[0,\infty))$  satisfying either (A1) or (A2) and $h\in L^1_{loc}((0,1),(0,\infty))$ satisfies $(H1)$ and $(H2)$.
Then $S:\mathcal{K} \to \mathcal{K}$ is completely continuous.
\end{lemma}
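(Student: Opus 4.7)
The strategy is the standard three-step verification: $S$ maps $\mathcal{K}$ into itself, $S$ is continuous on bounded subsets of $\mathcal{K}$, and $S$ sends bounded sets to relatively compact subsets of $C[0,1]$; the conclusion then follows from the Arzelà--Ascoli theorem. The computations immediately preceding the lemma already establish that the integral defining $Su$ converges and that $Su \in C[0,1]$ for each $u$, so the task reduces to making those estimates uniform in $u$ over bounded sets.

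For the cone invariance, nonnegativity of $Su$ is immediate from $G,h,f\geq 0$. To obtain the multiplicative estimate $Su(t)\geq (\alpha-1)t(1-t)\|Su\|_\infty$, I plan to combine the sharp lower bound on $G(t,s)$ from Lemma~\ref{lem2.5}(4) with the upper bound $G(t,s)\leq G(s,s)$ from Lemma~\ref{lem2.5}(2), exploiting that $t^{\alpha-1}\geq t$ on $[0,1]$ whenever $\alpha\in(1,2]$. For continuity, a sequence $u_n \to u_0$ uniformly inside a bounded subset of $\mathcal{K}$ forces $f(u_n)\to f(u_0)$ uniformly by continuity of $f$ on the resulting compact range, whence
\begin{equation*}
|Su_n(t)-Su_0(t)| \;\leq\; \|f(u_n)-f(u_0)\|_\infty\cdot\frac{1}{\Gamma(\alpha)}\int_0^1 s^{\alpha-1}(1-s)^{\alpha-1}h(s)\,ds,
\end{equation*}
and the right-hand integral is finite by $(H_1)$ near $s=0$ and by $(H_2)$ together with the integrability of $(1-s)^{\alpha-1}$ near $s=1$.

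The principal technical obstacle is the equicontinuity of $S(B)$ for a bounded set $B\subset\mathcal{K}$, since the weight $h$ may fail to be integrable at $s=0$. Using the growth condition (A1) or (A2), $f(u(s))$ is bounded by some constant $N=N(B)$ on $B$, so it suffices to control
\begin{equation*}
|Su(t_2)-Su(t_1)|\;\leq\; N\int_0^1 |G(t_2,s)-G(t_1,s)|\,h(s)\,ds
\end{equation*}
uniformly in $u\in B$ as $|t_2-t_1|\to 0$. The plan is to split the integration at $s=\delta$ and $s=1-\delta$: on the interior $[\delta,1-\delta]$, $G$ is uniformly continuous in $t$ and $h$ is bounded by $(H_2)$, so the contribution vanishes as $|t_2-t_1|\to 0$; on the two tail pieces, dominate $|G(t_2,s)-G(t_1,s)|$ by $2G(s,s)$ and use the integrability of $G(s,s)h(s)$---where the factor $s^{\alpha-1}$ is precisely what absorbs the singularity of $h$ at $0$ via $(H_1)$---to make the tails arbitrarily small by choosing $\delta$ appropriately, independently of $u$. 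Uniform boundedness of $S(B)$ is immediate from $\|Su\|_\infty\leq N\int_0^1 G(s,s)h(s)\,ds$, and Arzelà--Ascoli then completes the proof.
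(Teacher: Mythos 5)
Your overall architecture (cone invariance, continuity on bounded sets, uniform boundedness, equicontinuity, Arzel\`a--Ascoli) matches the paper's, and your continuity and boundedness steps are essentially the ones in the paper: dominate $G(t,s)$ by $G(s,s)$ and use that $\int_0^1 G(s,s)h(s)\,ds<\infty$ by $(H_1)$ near $0$ and $(H_2)$ near $1$. Where you genuinely diverge is equicontinuity. The paper invokes the companion result of \cite{LL} that $(Su)'\in L^1(0,1)$ and writes $|Su(t_2)-Su(t_1)|\le\int_{t_1}^{t_2}|(Su)'(s)|\,ds$, asserting this tends to $0$ ``for all $u\in\mathcal{M}$'' without exhibiting a $u$-independent dominating function; your argument instead splits the $s$-integral at $\delta$ and $1-\delta$, uses uniform continuity of $G$ together with $(H_2)$ on the interior piece, and dominates the tails by $2N\,G(s,s)h(s)$, whose integral over the tail intervals is small uniformly in $u$ by absolute continuity of the integral. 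This is self-contained and makes the uniformity in $u$ explicit, which the paper leaves implicit, at the cost of slightly longer bookkeeping. (Minor point: you do not need (A1)/(A2) to bound $f(u(s))$ on a bounded set; continuity of $f$ on the compact range suffices.)

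One caution on the cone-invariance step, which affects your sketch and the paper's proof in the same way: the lower bound in Lemma~\ref{lem2.5}(4) carries the weight $s(1-s)^{\alpha-1}$, while $G(s,s)=(s(1-s))^{\alpha-1}/\Gamma(\alpha)$ carries $s^{\alpha-1}(1-s)^{\alpha-1}$; since $s^{\alpha-1}\ge s$ on $[0,1]$ for $\alpha\le 2$, pairing these two bounds only yields $Su(t)\ge(\alpha-1)t^{\alpha-1}(1-t)\,(A/B)\,\|Su\|_\infty$ with $A=\int_0^1 s(1-s)^{\alpha-1}hf\,ds\le B=\int_0^1 s^{\alpha-1}(1-s)^{\alpha-1}hf\,ds$, which does not close the estimate $Su(t)\ge(\alpha-1)t(1-t)\|Su\|_\infty$. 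The paper's variant, which pairs the lower bound with Lemma~\ref{lem2.5}(5), has an analogous mismatch (there the obstruction is the unbounded factor $t^{\alpha-2}$, which cannot simply be evaluated at $t=1$ when passing to $\|Su\|_\infty$). Making this step airtight requires either a two-sided bound on $G$ with matching $s$-weights or a weighted cone in the spirit of \cite{Q}; as written, your plan inherits the paper's gap here rather than introducing a new one.
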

\begin{proof}
Since the Green's function, $h$ and $f$ are nonnegative, 
$$Su(t)=\int_{0}^{1} G(t,s)h(s)f(u(s))ds \geq 0.$$
By Lemma \ref{lem2.5}, we obtain
\begin{eqnarray*}
Su(t)&=&\int_{0}^{1} G(t,s)h(s)f(u(s))ds \\ &\geq & \frac{\alpha-1}{\Gamma(\alpha)}t^{\alpha-1}(1-t)\int_0^1 s(1-s)^{\alpha-1}h(s)f(u(s))ds.
\end{eqnarray*}
$$\|Su\|_\infty\leq \frac{1}{\Gamma(\alpha)}t^{\alpha-2}\int_0^1 s(1-s)^{\alpha-1}h(s)f(u(s))ds.$$
Hence, $$(\alpha-1)t(1-t)\|Su\|_\infty \leq Su(t).$$
This implies that $Su \in \mathcal{K}.$ 
 Let $\{u_k \} \subseteq E_\alpha$ be a convergent sequence to $u\in E_\alpha$. For any given $\epsilon>0,$ we let $$\epsilon_1=\frac{\epsilon}{\int_{0}^{1} G(s,s)h(s)ds}.$$ 
By the continuity of a function $f$, there exists $r>0$ such that for any $u_k$ with $|u-u_k|<r$, we have
\begin{align*}
|Su(t)-Su_k(t)|&\le \int_{0}^{1} |G(t,s)h(s)[f(u(s))-f(u_k(s))]|ds\\
&\le \int_{0}^{1} G(s,s)h(s)|f(u(s))-f(u_k(s))|ds\\
&\le \int_{0}^{1} G(s,s)h(s)ds\epsilon_1<\epsilon .
\end{align*}
By the continuity of $f$ and the condition $(H1), (H2)$, we can conclude that the operator $S$ is continuous. 
Let $\mathcal{M}$ be a bounded subset in $\mathcal{K}$.
Then  
we get
$$|Su(t)|\leq \int_{0}^{1} G(s,s)h(s)f(u(s))ds \leq M\int_{0}^{1} G(s,s)h(s)ds, $$
where $M=\sup_{u\in {\mathcal M}} \| f\circ u \|_{\infty}.$ 
Hence, $S(\mathcal M) $ is bounded by conditions $(H_1)$ and $(H_2)$.
\\
Now, we prove that $(Su)(\mathcal{M})$ is relatively compact subset of $C[0,1]$. Let $\{u_k\} \subseteq \mathcal{M}$.
It is proved that $Su' \in L^1(0,1)$ in Section 3 in \cite{LL}. From the fact that $f(u)$ is bounded, it follows that for any $0\le t_1<t_2\le 1,$
 \begin{align*}
|Su(t_2)-Su(t_1)|&\le  \int^{t_2}_{t_1}|Su'(s)|ds \rightarrow 0
\end{align*}
as $|t_1-t_2|\rightarrow 0$ for all $u\in \mathcal{M}$. Consequently, by Arzela-Ascoli theorem, $\{Su(t)\}_{u\in \mathcal{M}}$ is relatively compact. Therefore $S:\mathcal{K}\to \mathcal{K}$ is completely continuous and 
the proof is done.
\end{proof}
We now prove Theorem~\ref{thm4.1}. \\
\noindent
{\bf Proof of Theorem \ref{thm4.1}.}
Case 1. \ $f_0=0, f_\infty=\infty $.\\
By condition $f_0=0$, we may choose $r$ satisfying  for $0<u\leq r$, 
$$f(u)\leq  \varepsilon u,$$  
where $\frac{1}{2}>\varepsilon>0$ satisfies
$$\frac{\varepsilon}{\Gamma(\alpha)}\int_0^1 (s(1-s))^{\alpha-1}h(s)ds \leq  1.$$
Let $B_r =\{u\in  C[0,1] | ~\|u\|_{\infty}<r \}$, then for $u \in \mathcal{K}\cap\partial B_r$, we obtain
\begin{align*}
Su(t) &  \leq  \frac{1}{\Gamma(\alpha)}\int_0^1 (s(1-s))^{\alpha-1}h(s)f(u(s))ds\\
 & \leq \frac{1}{\Gamma(\alpha)} \int_0^1 (s(1-s))^{\alpha-1}h(s)\varepsilon u(s)ds\\
 & \leq \frac{\varepsilon}{\Gamma(\alpha)} \int_0^1 (s(1-s))^{\alpha-1}h(s)ds \|u\|_\infty \\
&\leq  \|u\|_\infty .
\end{align*}
Therefore, $\|Su\|_{\infty}\le \|u\|_{\infty}.$
Since $f_\infty=\infty$, there exists $M^{*}>0$ such that $f(u)\geq \rho u,$ for $u>M^{*}$, where $\rho >0$ is chosen so that  
$$\frac{\rho(\alpha-1)}{16}\int_{\frac{1}{4}}^{\frac{3}{4}} G(\frac{1}{2},s)h(s)ds\geq 1.$$
Let us take $R> \max\{\frac{16}{\alpha-1} M^{*}, r \}$ and
 $B_R =\{u\in  C[0,1] |\|u\|_{\infty}<R \}$, then for $u \in \mathcal{K}\cap\partial B_R$, 
 $$u(t)\geq \frac{\alpha-1}{16}\|u\|_\infty>M^{*},$$ 
for $t\in [\frac{1}{4},\frac{3}{4}]$ and
 \begin{align*}
\|Su\|_\infty \geq  Su(\frac{1}{2})&  =\int_0^1 G(\frac{1}{2},s)h(s)f(u(s))ds\\
 & \geq \int_{\frac{1}{4}}^{\frac{3}{4}} G(\frac{1}{2},s)h(s)f(u(s))ds\\
 & \geq \int_{\frac{1}{4}}^{\frac{3}{4}}  G(\frac{1}{2},s)h(s)\rho u(s)ds\\
 & \geq \frac{\rho(\alpha-1)}{16}\int_{\frac{1}{4}}^{\frac{3}{4}} G(\frac{1}{2},s)h(s)ds\|u\|_\infty\\
&\geq  \|u\|_\infty.
\end{align*}
 Therefore, by Theorem~\ref{thm2.4}, $S$ has a fixed point $u$ in  $u \in \mathcal{K}\cap ( \overline{B_R}\setminus B_r)$.\\ \\
Case 2.  \  $f_0=\infty$, $f_\infty=0$.\\
By condition $f_0=\infty$, we may choose $r_1$ so that $f(u)\geq L u$, for $0<u\leq r_1 $ where $L>0$ satisfies  
$$\frac{L(\alpha-1)}{16}\int_{\frac{1}{4}}^{\frac{3}{4}} G(\frac{1}{2},s)h(s)ds\geq 1.$$
Let $B_{r_{1}} =\{u\in  C [0,1] | \|u\|_{\infty}<r_{1} \}$, then for $u \in \mathcal{K}\cap\partial B_{r_{1}}$, we have
 \begin{align*}
\|Su\|_\infty \geq  Su(\frac{1}{2})&  =\int_0^1 G(\frac{1}{2},s)h(s)f(u(s))ds\\
 & \geq \int_{\frac{1}{4}}^{\frac{3}{4}} G(\frac{1}{2},s)h(s)f(u(s))ds\\
 & \geq \int_{\frac{1}{4}}^{\frac{3}{4}} G(\frac{1}{2},s)h(s)L u(s)ds\\
 & \geq \frac{L(\alpha-1)}{16}\int_{\frac{1}{4}}^{\frac{3}{4}} G(\frac{1}{2},s)h(s)ds\|u\|_\infty\\
&\geq  \|u\|_\infty.
\end{align*}
 Since $f_\infty=0$, there exists $L_2 >0$ such that $f(u)\leq  \zeta u$ ~for $u>L_2$ where $\zeta>0$ satisfies
$$\frac{\zeta}{\Gamma(\alpha)}\int_0^1 (s(1-s))^{\alpha-1}h(s)ds <  1.$$
And choose $R_2$ satisfying 
\begin{align*}
    R_2& > \max\{ L_2,\frac{\max_{0\leq u \leq L_2}|f(u)| \int_0^1 (s(1-s))^{\alpha-1}h(s)ds}{\Gamma(\alpha)-\zeta \int_0^1 (s(1-s))^{\alpha-1}h(s)ds}\}.
\end{align*}
Then we obtain
\begin{eqnarray*}
Su(t) &  \leq & \frac{1}{\Gamma(\alpha)}\int_0^1 (s(1-s))^{\alpha-1}h(s)f(u(s))ds\\
&  \leq&  \frac{1}{\Gamma(\alpha)}\Big[\int_{0\leq u \leq L_2} (s(1-s))^{\alpha-1}h(s)f(u(s))ds\\&&~~~~~~~~~~~~~~~~~+\int_{L_2\leq u \leq R_2} (s(1-s))^{\alpha-1}h(s)f(u(s))ds\Big]\\
&  \leq & \frac{1}{\Gamma(\alpha)}\Big[\max_{0\leq u \leq L_2}|f(u)|\int_{0\leq u \leq L_2} (s(1-s))^{\alpha-1}h(s)ds\\&&~~~~~~~~~~~~~~~~~+\int_{L_2\leq u \leq R_2} (s(1-s))^{\alpha-1}h(s)\zeta u(s)ds\Big]\\
&  \leq & \frac{1}{\Gamma(\alpha)}\Big(\max_{0\leq u \leq L_2}|f(u)|+\zeta \|u\|_\infty\Big)\int_{0}^{1} (s(1-s))^{\alpha-1}h(s)ds\\
&\leq &R_2= \|u\|_\infty .
\end{eqnarray*}
This implies that  $\|Su\|_{E_\alpha} \leq \|u\|_\infty$ for $u \in \mathcal{K}\cap\partial B_{R_2}$, and thus
$S$ has a fixed point $u$ in $\mathcal{K}\cap ( \overline{B_{R_2}}\setminus B_{r_1}).$
\begin{example}
In \cite{I}, Kong and Wang examined the existence of solution for the following equation
\begin{align}\label{(4.1)}
\begin{cases}
 D^{\alpha}_{0+}u(t)+\omega (t)u^{\theta}= 0,\quad 0<\theta<1<\alpha<2\\
u(0)= 0 = u(1),
\end{cases}
\end{align}
where $\omega \in$ C[0,1].
They showed that \eqref{(4.1)} has at least one positive solution.

Furthermore, if we take $f(t,u)=\omega(t)u^{\theta}$, then $f(t,t^{\alpha-2}u)=\omega(t)t^{(\alpha-2)\theta}u^{\theta}$. By choosing $g(u)=u^{\theta}$ and $q_{1}(t) = q_{2}(t)=\omega(t)t^{(\alpha-2)\theta},$ we get $q_{1},  q_{2} \in L(0,1)$ and $g_0=\infty$, $g_\infty=0$, thus $f$ satisfies condition (A) in \cite{Q} so that we get  the same result by applying Theorem 1.5  in \cite{Q} to  problem \eqref{(4.1)}.
When $\omega$ is given as $\omega(t)=t^{-\beta}, 1<\beta<\alpha$, then problem \eqref{(4.1)}  becomes
\begin{align}\label{(4.2)}
\begin{cases}
 D^{\alpha}_{0+}u(t)+t^{-\beta}u^{\theta}= 0,\quad 0<\theta<\alpha-1,1<\beta<\alpha,1<\alpha<2\\
u(0)= 0 = u(1).
\end{cases}
\end{align}
and the above results are not applicable since $\omega \notin C[0,1]$
and condition (A) is not valid. Indeed, if we take $f(t,u)=t^{-\beta}u^{\theta}$, then $f(t,t^{\alpha-2}u)=t^{-\beta+(\alpha-2)\theta}u^{\theta}$ in which $t$-term is not integrable by the fact  that $-\beta-\theta<-\beta+(\alpha-2)\theta<-\beta<-1$. Therefore it is impossible to find $q_1,q_2$ satisfying condition (A).

But $\omega(t)=t^{-\beta}, 1<\beta<\alpha$ satisfies $(H_1),(H_2)$ and $f(u)=u^{\theta}$ satisfies conditions $(A2)$. 
Therefore, we can apply Theorem~\ref{thm4.1} to guarantee the existence of at least one positive solution in $E_\alpha [0,1]$ for \eqref{(4.2)}.
\end{example}

\section*{Acknowledgment}
The authors express their gratitude to anonymous referees for their helpful suggestions which improved final version of this paper.

\section*{Funding}
This work was supported by the National Research Foundation of Korea, Grant funded by the Korea Government (MEST) (NRF2016R1D1A1B04931741).

\section*{Availability of data and materials}
Data sharing not applicable to this article as no datasets were generated or analyzed during the current study.

\section*{Competing interests}
The authors declare that there is no competing interests for this paper.

\section*{Author's contributions}
All authors have equally contributed in obtaining new results in this article and also
read and approved the final manuscript.

\section*{Author's information}
  Jinsil Lee, Department of Mathematics, University of Georgia,  Athens, GA 30606, USA.
   E-mail: jl74942@uga.edu\\
 Yong-Hoon Lee, Department of Mathematics, Pusan National University, Busan 46241, Republic of Korea.
    E-mail: yhlee@pusan.ac.kr

\section*{Publisher's Note}
Springer Nature remains neutral with regard to jurisdictional claims in published maps and institutional affiliations.
\bibliographystyle{plain}

\begin{thebibliography}{60}

\bibitem{A}
{ Igor Pdolubny},
{\em Fractional Differential Equations},
Mathematics in Science and Engineering V198, Academic Press 1999.
\bibitem{B}
{ Tomas Kisela},
{\em  Fractional  Differential  Equations  and  Their  Applications},
 Fakulta  strojniho  inzenyrstvi. 2008
 \bibitem{P}
A. A. Kilbas, H. M. Srivastava, J. J. Trujillo, Theory and application of fractional differential
equation, North-Holl and Mathematics Studies, 2006, 1-523.



\bibitem{C} {Zhongli Wei}, {\em Positive solution of singular Dirichlet boundary value problems for second order differential equation system}, J. Math. Anal. Appl. {\bf 328} (2007) 1255-1267.




\bibitem{E}
{Zhanbing Bai and Haishen L\"{u}},
{\em Positive solutions for boundary value problem of nonlinear fractional differential equation},
 J. Math. Anal. Appl.  {\bf  311}  (2005) 495-505.


\bibitem{I}
{Qingkai Kong and Min Wang},
{\em Positive solutions of nonlinear fractional boundary value problems with Dirichlet boundary conditions},
Electron. J. Qual. Theory Differ. Equ.  {\bf 17} (2012) 1-13.


\bibitem{Q}{Daqing Jiang, Chengjun Yuan}
The positive properties of the Green function for Dirichlet-type boundary value problems of nonlinear fractional differential equations and its application, Nonlinear Analysis: Theory, Methods \& Applications
Volume 72, Issue 2, 15 January 2010, Pages 710-719

\bibitem{N}
{  Ahmed M.A.El-Sayed, Fatma M.Gaafar},
{\em Existence of Solutions for Singular Second-Order Ordinary Differential Equations with Periodic and Deviated Nonlocal Multipoint Boundary Conditions},
Hindawi, Journal of Function Spaces, Volume 2018, Article ID 9726475, 11 pp.

\bibitem{O}
{Xinwei Su}
{\em Boundary value problem for a coupled system of nonlinear fractional differential equations}
Applied Mathematics Letters 22 (2009) 64-69

\bibitem{LL}
{Jinsil Lee, Yong-Hoon Lee}
Regularity of solutions for singular fractional differential equation, preprint
\end{thebibliography}

\end{document}